\newtheorem{theorem}{Theorem}[section]
\newtheorem{lemma}[theorem]{Lemma}
\newtheorem{conjecture}[theorem]{Conjecture}
\newtheorem{proposition}[theorem]{Proposition}
\newtheorem{observation}[theorem]{Observation} 
\theoremstyle{definition}
\newtheorem{definition}[theorem]{Definition}
\newtheorem{example}[theorem]{Example}
\newcommand\HHH{\mathbb{H}^3}
\newcommand\Isom{\text{Isom}}
\newcommand\homeo{\cong} 
\newcommand\lb{\lbrace} 
\newcommand\rb{\rbrace} 
\title{Boundaries of Kleinian groups}
\author{Peter Ha\"issinsky, Luisa Paoluzzi, and Genevieve Walsh} 
\begin{document} 
\maketitle

\begin{abstract}We review the theory of splittings of hyperbolic groups, as determined by the topology of the boundary.  We give explicit examples of certain phenomena and then use this to describe limit sets of Kleinian groups up to homeomorphism.
\vskip 2mm
\noindent\emph{AMS classification: } Primary 20F67, 30F40; Secondary  57N16.

\end{abstract} 

\let\thefootnote\relax\footnote{We would like to thank CIRM (Centre International de Recontres Math\'ematiques) for funding the 2-week ``Research in Pairs" program where this project was initiated.  P. Ha\"isinsky and L. Paoluzzi were partially supported by ANR project  GDSous/GSG no. 12-BS01-0003-01. G. Walsh was partially supported by NSF grant 1207644. } 
\section{Introduction and Background}

A {\it Kleinian group}  $G$ is a discrete subgroup of $\mathbb{P} SL_2(\mathbb{C})$. It acts properly discontinuously 
on the hyperbolic $3$-space $\HHH$ via orientation-preserving isometries and it acts  on the Riemann
sphere $\widehat{\mathbb{C}}$ via M\"obius transformations. The latter action is usually not properly discontinuous: there is 
a canonical and invariant partition $$\widehat{\mathbb{C}} = \Omega_G \sqcup \Lambda_G$$
where $\Omega_G$ denotes the {\it ordinary set}, which is the largest  open set of $\widehat{\mathbb{C}}$ on which $G$ acts properly discontinuously, 
and where $\Lambda_G$ denotes the  {\it limit
set}, which is the minimal $G$-invariant compact subset of $\widehat{\mathbb{C}}$. 
A Kleinian group $G$ is {\it convex-cocompact} if there is a convex
subset $\mathcal{C}$ invariant under $G$ such that the restriction of the action of $G$ to $\mathcal{C}$ is cocompact.

We wish to address the following general problem.

\bigskip

\noindent{\bf Problem. --- } {\it Classify (convex-cocompact) Kleinian groups from the topology of their limit set. 
In particular, what can be said of two convex-cocompact Kleinian groups which have homeomorphic limit sets?}

\bigskip

We will provide some examples showing that there are no obvious answers and that certainly an interesting answer might
depend on the specific topology of the limit sets we are considering.  We observe below in Section 3.1 that a particular type (graph-Kleinian) of Kleinian group can be detected via the boundary and we expect that a different type (mixed Kleinian surface) is not detected via the boundary. This question may be posed more generally for word hyperbolic
groups: we will start reviewing their definition and main properties with respect to this problem and similar ones.

\subsection{Hyperbolic spaces and groups in the sense of Gromov}
Mirroring a key property of $\mathbb{H}^n$, a geodesic metric space $X$ is called {\it Gromov-hyperbolic} or {\it $\delta$-hyperbolic} if there exists a $\delta \geq 0$ such that for every geodesic triangle in $X$, the third side is contained the union of the $\delta$-neighborhoods of the other two.  Analogous  to Fuchsian and Kleinian groups, {\it hyperbolic groups} are groups $G$ that by definition act {\it geometrically} (properly discontinuously, co-compactly and by isometries) on some Gromov-hyperbolic metric space $X$.  Since any two geodesic metric spaces on which $G$ acts geometrically are quasi-isometric \cite[p 141]{BH} and any geodesic metric space that is quasi-isometric to a hyperbolic space is hyperbolic \cite[p 409-410]{BH}, this notion is well-defined.  We will abuse notation and say that $G$ is quasi-isometric to a space $X$.  For example, if $G$ is the fundamental group of a surface of genus $g \geq 2$,  then $G$ is hyperbolic and quasi-isometric to $\mathbb{H}^2$.

When $X$ is a hyperbolic metric space, we can define the boundary of $X$, $\partial X$, which is a topological space.  An important property of this boundary is that when $X$ is hyperbolic, and $X'$ is quasi-isometric to $X$, the boundaries $\partial X$ and $\partial X'$ are homeomorphic. Therefore, we consider this to be the boundary of $G$ when $G$ acts geometrically on $X$.  We denote this boundary $\partial G  \homeo \partial X$, when $X$ is  any geodesic space on which $G$ acts geometrically. 

There are many equivalent definitions of the boundary of a hyperbolic space $X$.  For our purposes we will consider the geodesic boundary. The points in this space consist of equivalence classes of geodesic rays in $X$, where two rays are equivalent if they have bounded Hausdorff distance.  We can define a natural topology on the boundary given by the following neighborhood basis. For   $p \in \partial X$, 
$$V(p,r) = \lbrace q \in \partial X: \text{for some geodesic rays $\gamma_1$ }$$
$$\text{and $\gamma_2$}, [\gamma_1] = p, [\gamma_2] =q, \liminf_{s,t\rightarrow \infty}(\gamma_1(s), \gamma_2(t))_x \geq r \rbrace$$ 
where $(y,z)_x = \frac{1}{2}(d(x,y) + d(x,z) -d (y,z))$.   Then as $r \rightarrow \infty$, these nested sets form a basis for the topology of $\partial X$. 

Since the topological type of the boundary is an invariant of the quasi-isometry class of a hyperbolic group, it is natural to ask if hyperbolic groups with homeomorphic boundaries are quasi-isometric.  This is false.  There are examples of hyperbolic buildings given by Bourdon \cite{Bourdonbuildings} 
which are not quasi-isometric but which have homeomorphic boundaries.  
Their associated isometry groups act geometrically on the buildings giving counter-examples for groups. 
It should be noted that the boundary carries a canonical quasiconformal structure which determines the group up to quasi-isometry \cite{Paulin}. 
A natural  numerical quasi-isometry invariant coming from the quasiconformal structure is the conformal dimension of the boundary \cite{pansu:confdim}.
In Bourdon's examples, this invariant suffices to distinguish buildings from one another. We will give examples of groups  in the sequel
which all have the same conformal dimension, equal to one. 

\subsection{Convex co-compact Kleinian groups} 
An important class of hyperbolic groups are convex co-compact Kleinian groups.  A Kleinian group is a discrete subgroup of $\Isom(\HHH)$.  When the quotient of $\mathbb{H}^3$ by a Kleinian group $G$ is a manifold $\mathbb{H}^3 / G$, we call it a {\it Kleinian manifold}. Regarding $\mathbb{H}^3$ in the Poincar\'e ball model, $\partial \HHH   \homeo S^2$, and $\HHH \cup \partial \HHH$ is naturally a closed ball.  Isometries of $\HHH$ extend to (and are extensions of) conformal transformations of $S^2$.  If $G$ is a Kleinian group, the closure in $S^2$ of an orbit of a point doesn't depend on the point.  This set in $S^2$ is called the limit set of $G$, $\Lambda(G)$.  Let $C(G)$ denote the convex hull of $\Lambda(G)$ in $\HHH \cup \partial \HHH$. A Kleinian group $G$ leaves its limit set invariant, and therefore acts properly discontinuously and by isometries on $C(G) \setminus \Lambda(G)$.   Since $\HHH$ is a $\delta$-hyperbolic space, so is any convex subset of $\HHH$.  When $G$ acts co-compactly on $C(G) \setminus \Lambda(G)$, $G$ is a hyperbolic group and  $\partial G \homeo \Lambda(G)$.  In this case we say $G$ is a {\it convex co-compact Kleinian group}.    For example, the fundamental groups of closed hyperbolic 3-orbifolds, the fundamental groups of hyperbolic orbifolds with totally geodesic boundary, free groups, and the fundamental groups of closed surfaces of genus greater than 1 all have actions as convex co-compact Kleinian groups.  The fundamental groups of hyperbolic knot complements do not, since they are not hyperbolic groups.   When $G$ is a convex co-compact Kleinian group and $(C(G) \setminus \Lambda(G) ) / G$ is a manifold, we call the quotient a {\it convex co-compact Kleinian manifold}. 

The class of convex co-compact Kleinian groups exhibits both rigidity and non-rigidity with respect to boundary homeomorphism.  For example, any hyperbolic group with boundary homeomorphic to two points is virtually cyclic and any hyperbolic group with boundary a Cantor set is virtually free \cite[\S\,8]{KB}.  Due to the work of many authors \cite{G, CJ, Bow} any hyperbolic group with boundary homeomorphic to $S^1$ is virtually a Fuchsian group.  Thus for groups with these boundaries,  $\partial G \homeo \partial G'$  implies that $G$ is quasi-isometric to $G'$.  

On the other hand, the situation for Kleinian groups with Sierpi\'nski carpet boundary is somewhat different.  The Sierpi\'nski carpet is the unique planar, 1-dimensional, connected, locally connected, compact  topological space without cut points or local cut points.  The Sierpi\'nski carpet can be realized as the complement of a union of round open discs in $S^2$.  When $\Gamma$ is the fundamental group of a hyperbolic 3-manifold with totally geodesic boundary, $\Gamma$ can be realized as a convex co-compact Kleinian group $G$ where $\Lambda(G)$ (and hence $\partial G$) is homeomorphic to the Sierpi\'nski carpet.  The stabilizers of the discs are precisely the conjugates of the surface groups corresponding to the totally geodesic boundary components.  The boundaries of the discs are the boundaries of these surface subgroups. These 3-manifold groups have homeomorphic boundaries and may not be quasi-isometric.  

Frigerio \cite{Frig} has proven that the fundamental groups of hyperbolic manifolds with totally geodesic boundary are quasi-isometric exactly when the groups are commensurable.  It is easy to construct non-commensurable hyperbolic manifolds with totally geodesic boundary: Every manifold $M$ with 1 totally geodesic boundary component contains a knot $k$ such that $M \setminus K$ is a hyperbolic manifold with 1 cusp and 1 totally geodesic boundary component, by Myers \cite{Myers}.  Then high-enough Dehn filling on this cusp will produce hyperbolic manifolds whose volumes approach the volume of the cusped manifold. Since their boundaries have the same genus, any commensurable manifolds in this set must have the same volume.  As Dehn filling produces manifolds with infinitely many different volumes, there are infinitely many commensurability classes in this set. Then by Frigerio, there are infinitely many quasi-isometry classes in this set.   Bourdon and Kleiner \cite{BK} give a completely different example of infinitely many groups with Sierpi\'nski carpet boundaries where the groups have different conformal dimensions which shows they are not quasi-isometric.

A less restrictive rigidity question is: If $\partial G \homeo \partial G'$, does $G$ act geometrically on the same type of space as $G'$? (Here type might mean different things.) In the category of convex co-compact Kleinian groups, there are several important open questions in this spirt. 

\begin{conjecture} [Cannon] \cite{Cannonconj}  If $G$ is a hyperbolic group with $\partial G \homeo S^2$, $G$ acts geometrically on $\mathbb{H}^3$. 
\end{conjecture}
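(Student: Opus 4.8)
The plan is to transfer the problem from the group to the quasiconformal geometry of its boundary, uniformize that boundary, and then apply a rigidity theorem for group actions on the round sphere. Recall from above that $\partial G$ carries a canonical quasiconformal structure determined by the quasi-isometry type of $G$ \cite{Paulin}, and that $G$ acts on $\partial G$ by uniformly quasi-M\"obius homeomorphisms. Fixing a visual metric makes $\partial G \homeo S^2$ into a compact metric $2$-sphere on which $G$ acts cocompactly by a uniformly quasi-M\"obius group of homeomorphisms. The strategy has three steps: (i) produce a quasisymmetric homeomorphism from this visual metric sphere to the standard round sphere $\widehat{\mathbb{C}} = \partial \HHH$; (ii) conjugate the $G$-action by this homeomorphism to obtain a uniformly quasiconformal action on $\widehat{\mathbb{C}}$, and then apply the Sullivan--Tukia theorem to conjugate it further, by a quasiconformal map, to an action by M\"obius transformations; (iii) observe that a M\"obius action on $\widehat{\mathbb{C}}$ extends to an isometric action on $\HHH$, and that the uniform convergence dynamics of the boundary action forces this Kleinian group to be discrete and cocompact, so that the action is geometric. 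Steps (ii) and (iii) are essentially formal once (i) is in hand.

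The heart of the matter is therefore the uniformization in (i). Here the decisive tool is the theorem of Bonk and Kleiner that an Ahlfors $2$-regular, linearly locally connected metric $2$-sphere is quasisymmetric to $\widehat{\mathbb{C}}$. The linear local connectivity of a visual metric on $\partial G$ is known for boundaries of hyperbolic groups that are topological spheres, so the content is to exhibit, within the canonical gauge of the boundary, a metric that is Ahlfors $2$-regular. Since each visual metric is Ahlfors $Q$-regular for $Q$ equal to its Hausdorff dimension, and the sphere forces $Q \geq 2$, the clean reformulation of what remains is: \emph{the Ahlfors-regular conformal dimension of $\partial G$ equals $2$ and is attained} \cite{pansu:confdim}. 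Indeed, Bonk and Kleiner showed that Cannon's conjecture is equivalent to this attainment statement, so a proof reduces to producing an optimal Ahlfors $2$-regular metric in the gauge.

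To attack attainment I would use Cannon's combinatorial machinery. The cocompact self-similar action of $G$ produces a sequence of finer and finer shinglings (locally finite coverings) of $\partial G$ that refine equivariantly, and one measures the combinatorial $2$-modulus of families of curves, for instance the family of curves separating the two boundary components of a topological annulus. The aim is to show that these combinatorial moduli obey the axioms of Cannon's combinatorial Riemann mapping theorem, which would manufacture the conformal (equivalently, Ahlfors $2$-regular) structure and realize the conformal dimension.

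The main obstacle --- and the reason the statement is still a conjecture --- is precisely this last point: one must establish two-sided bounds on the combinatorial moduli that are uniform across all scales, so that the modulus of a fixed annulus neither degenerates to $0$ nor blows up as the shinglings are refined. Without already knowing the conformal structure, controlling these moduli uniformly is the essential difficulty; everything upstream (the reduction to uniformization, the Sullivan--Tukia rigidity, and the linear local connectivity) is standard, whereas verifying the modulus axioms, and thereby forcing the conformal dimension down to exactly $2$ and showing it is realized, is where a genuinely new idea would be required.
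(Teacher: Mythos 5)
This statement is Cannon's conjecture, and the paper offers no proof of it --- it is stated as a \emph{conjecture} precisely because it remains open. Your proposal is therefore not a proof, and you have in fact diagnosed the gap yourself. The reduction you describe is the standard and correct one: by Sullivan--Tukia rigidity, a uniformly quasiconformal cocompact convergence action on the round sphere is quasiconformally conjugate to a M\"obius action, and by Bonk--Kleiner the whole problem is equivalent to showing that the Ahlfors-regular conformal dimension of $\partial G$ equals $2$ \emph{and is attained} by a metric in the canonical gauge. But that attainment statement is the entire content of the conjecture, not a lemma one can cite. Equally, Cannon's combinatorial Riemann mapping theorem does not ``manufacture'' the conformal structure unconditionally: its hypotheses are exactly the uniform two-sided bounds on combinatorial moduli of annuli across all scales of the shinglings, and no one knows how to verify these axioms for a general hyperbolic group with sphere boundary. (They are known in special cases, e.g.\ under additional assumptions on the group or when the conformal dimension is not attained one can sometimes derive contradictions, but not in general.)

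So the concrete missing step is: produce an Ahlfors $2$-regular metric in the gauge of $\partial G$, or equivalently verify Cannon's modulus axioms for the equivariant shinglings. Everything upstream of that point in your outline is correct and standard, but the proposal terminates exactly where the mathematics does, and your closing admission that ``a genuinely new idea would be required'' is an accurate description of the state of the art rather than a proof. For the purposes of this paper, the statement should remain labeled as a conjecture.
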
 

\begin{conjecture} [Kapovich and Kleiner] \cite{KK}  If $G$ is a hyperbolic group with $\partial G \homeo S$, where $S$ is the Sierpi\'nski carpet, then $G$ acts geometrically on a convex subset of $\mathbb{H}^3$ bounded by totally geodesic planes. 
\end{conjecture}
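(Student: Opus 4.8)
Since the statement is an open conjecture, the realistic target is a \emph{conditional} theorem: granting Cannon's Conjecture (stated above) for hyperbolic groups with $S^2$ boundary, I would deduce the carpet case. The overall plan is to manufacture from $G$ a ``doubled'' group $DG$ whose boundary is a $2$-sphere, apply Cannon to $DG$, and then push the resulting geometric structure back down to $G$. First I would extract a canonical peripheral structure from the topology of $S=\partial G$. Realizing $S$ as $S^2$ minus a dense, disjoint family of round open disks, the boundary circles $\{C_i\}$ of the complementary disks are the \emph{peripheral circles}; by Whyburn's characterization of the carpet, every self-homeomorphism of $S$ permutes this family, so the action of $G$ on $\partial G$ permutes the $\{C_i\}$, and cocompactness of the $G$-action leaves only finitely many orbits.

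Writing $H_i=\mathrm{Stab}_G(C_i)$ for orbit representatives, each $H_i$ is quasiconvex and acts as a convergence group on $\partial H_i=C_i\homeo S^1$, so by the theorem recalled above \cite{G,CJ,Bow} it is virtually Fuchsian, hence a virtual surface group. The technical heart of this step is to show, as in \cite{KK}, that the $H_i$ are quasiconvex and that distinct peripheral circles are \emph{geometrically separated} (equivalently, the family $\{H_i\}$ is almost malnormal, which one expects because distinct $C_i$ are disjoint and an infinite subgroup fixing two disjoint circles would violate the convergence property).

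Next I would form the double $DG$, amalgamating two copies $G_1,G_2$ of $G$ over the peripheral subgroups $\{H_i\}$. Quasiconvexity and separation of the $H_i$ allow an application of the Bestvina--Feighn combination theorem, so $DG$ is hyperbolic; a tree-of-spaces description of $\partial(DG)$ as copies of the carpet glued along the circles $C_i$, together with the topological fact that doubling a carpet across its peripheral circles yields a sphere, gives $\partial(DG)\homeo S^2$. The model case is a hyperbolic $3$-manifold with totally geodesic boundary, whose metric double is closed with $S^2$ boundary at infinity, exactly reversing the construction recalled in the carpet discussion above. Cannon's Conjecture then realizes $DG$ as a cocompact Kleinian group acting geometrically on $\HHH$. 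Finally, $DG$ carries the canonical involution $\sigma$ swapping $G_1$ and $G_2$ and fixing each $H_i$; by Mostow rigidity $\sigma$ is realized by an orientation-reversing isometric involution of $\HHH$, whose fixed-point set is a totally geodesic plane $P_i$ stabilized by $H_i$. Hence each $C_i=\partial P_i$ is a round circle, and $G=G_1$, being quasiconvex in $DG$, acts geometrically on the convex region bounded by the planes $\{g\cdot P_i\}$, which is precisely the desired conclusion.

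The main obstacle is that Cannon's Conjecture is itself open, so unconditionally this argument only trades one hard problem for another. Even granting Cannon, the delicate points are the canonicity, quasiconvexity, and separation of the peripheral family and the verification that the doubled boundary is genuinely $S^2$; these are exactly where the combination theorem and the topology of the carpet must be controlled simultaneously.
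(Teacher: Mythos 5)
The statement you were asked to prove is labelled a \emph{conjecture} in the paper, and it is in fact still open: the paper contains no proof of it, so there is nothing internal to compare your argument against. You correctly recognized that the only realistic move is a conditional reduction, and the reduction you sketch --- topologically characterize the peripheral circles via Whyburn so that $G$ permutes them with finitely many orbits, show the stabilizers $H_i$ are quasiconvex virtual surface groups via the convergence-group theorem, double $G$ along the $H_i$ to get a hyperbolic group with boundary $S^2$, apply Cannon's Conjecture, and then use Mostow rigidity on the canonical involution to recover totally geodesic reflection planes --- is precisely the content of Kapovich and Kleiner's own paper \cite{KK}, where the implication ``Cannon's Conjecture $\Rightarrow$ the carpet conjecture'' is proved. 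So your proposal is not a proof of the statement, and it is also not a new route; it is an accurate outline of the known state of the art, with the genuinely hard points (quasiconvexity and almost-malnormality of the peripheral family, hyperbolicity of the double via the combination theorem, and the identification of $\partial(DG)$ with $S^2$) correctly flagged rather than carried out.

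Two small points worth tightening if you ever write this up in full. First, when you invoke Mostow rigidity to realize the swap involution $\sigma$ isometrically, an orientation-reversing isometric involution of $\mathbb{H}^3$ could a priori be a point reflection rather than a reflection in a plane; you rule this out by observing that the realizing isometry must fix the limit circle $\partial H_i \subset S^2$ pointwise (fixed points of elements of $H_i$ are dense there), and the only non-identity isometry fixing a round circle pointwise is the reflection in the geodesic plane it bounds. Second, ``doubling a carpet across its peripheral circles yields a sphere'' is a statement about the boundary of the amalgam, not just about the topological double of the compactum, and justifying it requires an analysis of how $\partial(DG)$ is assembled from the copies of $S$ along the tree of the splitting --- this is where most of the work in \cite{KK} lives.
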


\section{Review of Bowditch's splittings} {\label{splittings}}

We now briefly review Bowditch's theory of an important relationship between the topology of the boundary of a group and the group's algebraic structure.  We remark that there are prior and related theories of group splittings which generalize the splittings of a manifold along essential spheres, tori, and cylinders (\cite{ScottSwarup}). 

\begin{theorem}[Theorem 0.1, \cite{Bowcut}]\label{thm:jsj}  Suppose that $\Gamma$ is a one-ended hyperbolic group, which is not a co-compact Fuchsian group. Then there is a canonical splitting of $\Gamma$ as a finite graph of groups such that each edge group is two-ended, and each vertex group is of one of the following three types:
\begin{enumerate} 
\item  a two-ended subgroup,
\item a maximal ``hanging fuchsian" subgroup, or
\item a non-elementary quasiconvex subgroup not of type (2).
\end{enumerate} 
These types are mutually exclusive, and no two vertices of the same type are adjacent.
Every vertex group is a full quasiconvex subgroup. Moreover, the edge groups that connect to any given vertex group of type (2) are precisely the peripheral subgroups of that group. \end{theorem}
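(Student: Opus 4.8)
The plan is to read the algebraic splitting off the topology of $\partial\Gamma$, following the philosophy that essential splittings over two-ended subgroups are detected by local cut points of the boundary. First I would record the topological properties of $M=\partial\Gamma$ that make this possible: since $\Gamma$ is one-ended, $M$ is a compact, connected, metrizable continuum; by the work of Bestvina--Mess and Bowditch it is locally connected; and by the Bowditch--Swarup theorem it has no global cut points. The dictionary between topology and algebra is supplied by the fact that $\Gamma$ acts on $M$ as a \emph{uniform convergence group}, so that stabilizers of canonical topological features are canonical subgroups of $\Gamma$, and each two-ended subgroup corresponds precisely to an unordered pair of points fixed by a loxodromic element.

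Next I would analyze the local cut point structure. Call $x\in M$ a local cut point if some connected open neighborhood $U$ has $U\setminus\{x\}$ disconnected, and assign to each such $x$ a valency counting the complementary pieces. The central objects are the \emph{cut pairs} --- unordered pairs $\{x,y\}$ that together locally separate $M$ --- since these are exactly the endpoint pairs of the two-ended subgroups over which $\Gamma$ splits. I would organize the cut pairs and local cut points into a $\Gamma$-invariant betweenness/separation relation and show that this relation satisfies the pretree axioms. Collections of local cut points arranged ``cyclically'' form \emph{necklaces}; these are the topological signature of a maximal hanging fuchsian subgroup, whose limit set is a circle on which the complementary ``beads'' are strung.

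From the pretree I would construct a simplicial $\Gamma$-tree $T$ whose edges correspond to the equivalence classes of separating cut pairs and whose vertices correspond either to individual cut pairs, to necklaces, or to the maximal non-separated ``rigid'' pieces. Cofiniteness of the action (finiteness of the quotient graph) follows from accessibility of hyperbolic groups together with the quasiconvexity of the relevant stabilizers. Applying Bass--Serre theory to the action of $\Gamma$ on $T$ then yields the finite graph of groups: edge groups are stabilizers of cut pairs, hence two-ended; vertex groups are two-ended (type 1), hanging fuchsian at the necklace vertices (type 2), or the remaining quasiconvex pieces (type 3). I would finish by verifying the remaining assertions: each vertex stabilizer is full and quasiconvex because it is the stabilizer of a quasiconvex sub-continuum of $M$; the types are mutually exclusive and non-adjacent by construction, since any edge joining two vertices of the same type could be collapsed or refined away; and for a type (2) vertex the peripheral subgroups are exactly the stabilizers of the necklace beads, which are precisely the incident edge groups.

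The main obstacle is the middle step: proving that the separation structure coming from local cut points and cut pairs is genuinely tree-like and produces a \emph{simplicial} tree with cofinite action. This is where the deep topological inputs are indispensable --- local connectedness and the absence of global cut points are what guarantee that there are only finitely many cut pairs up to the action and that the betweenness relation is a bona fide pretree rather than a more degenerate structure. Equally delicate is isolating the hanging fuchsian pieces: one must show that a necklace really does have virtually Fuchsian stabilizer and that its peripheral structure matches the incident edges, which rests on the convergence-group dynamics identifying the invariant circle stabilized by that vertex group.
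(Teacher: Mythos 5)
This statement is Bowditch's theorem, which the paper quotes from \cite{Bowcut} without proof; the only in-paper content is the expository paragraph following the statement, which describes reading the splitting off the boundary via local cut points, valencies, and the equivalence classes (pairs versus cyclically ordered Cantor sets with virtually free, hanging-Fuchsian stabilizers), and your outline matches both that description and Bowditch's actual strategy of cut pairs, pretrees, necklaces, and convergence-group dynamics. Your text is a faithful roadmap rather than a proof --- you correctly identify that the pretree axioms, the cofiniteness of the tree action, and the identification of necklace stabilizers as virtually Fuchsian constitute the real work, and those are precisely the parts carried out in Bowditch's original paper rather than here --- so there is nothing to fault relative to what this paper itself provides.
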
 
Note: the peripheral subgroups are the (orbi-)boundaries of the surfaces (orbifolds) in type (2) 

The graph of group splitting can be seen from the topology of the boundary as follows.  The local cut points of the boundary $M = \partial G$ are the points $p \in M$ such that $M \setminus p$ has more than one end. The number of ends is called the valency of $p$, $Val(p)$.  Two local cut points $x$ and $y$ are said to be equivalent if $x=y$ or if $Val(x) = Val(y)=n$ where $M \setminus \lb x,y \rb$ has $n$ components.  Bowditch proves that all equivalence classes of local cut points contain either 2 or infinitely many elements.  The stabilizers of pairs of equivalent local cut points are 2-ended groups and hence virtually cyclic.  Such a pair $\lb x,y \rb$ will correspond to a splitting of the group where the vertex groups are the stabilizers of the the closures of the components of $M \setminus \lb x,y \rb$,  and another copy of $\mathbb{Z}$ for the pair $\lb x,y \rb$. 
The infinite equivalence classes are all valence 2, and each infinite equivalence class $\sigma$ comes equipped with a cyclic order.   The closure $\bar \sigma$ is a cyclically ordered Cantor set,  whose stabilizer is a free group subgroup of $G$.  The equivalent pairs of points in $\bar \sigma \setminus \sigma$  correspond to 2-ended groups, which are edge groups in the graph of groups splitting.  The stabilizer of $\bar \sigma$ is a virtually a surface group which is ``hanging onto" the rest of the group via its boundary subgroups.   
The rigid pieces will arise as follows.  Each splitting along a two-ended group will result in several components, where the stabilizers of the closures of these components are subgroups of $G$.  The intersections of these subgroups (corresponding to intersections of the components) may be a hanging Fuchsian group as above, which can be detected via the local cut points.   On the other hand, they could have no further splittings, in which case these group are the rigid subgroups. We give such an example below.

\section{Examples}  
We consider examples of one-ended convex-cocompact Kleinian groups where each piece in its JSJ decomposition is a free group.
It follows that their conformal dimensions are all equal to one so conformal dimension cannot be used to distinguish quasi-isometry classes \cite{carrasco:confgauge}. 

\subsection{Books of $I$-bundles} 

A {\it graph-Kleinian manifold}  is a convex co-compact Kleinian manifold that admits a decomposition along essential annuli so that each piece is a solid torus or an $I$-bundle over a compact surface with boundary.  For each $I$-bundle over $S$, we require that $\partial S \times I$ is exactly its intersection with the annuli.  In terms of the Bowditch decomposition above, the vertex groups are all of type (1) or (2).  A graph-Kleinian manifold is exactly a Kleinian manifold with boundary whose double is a graph-manifold.  These are often called ``Books of $I$-bundles" as in \cite[p 286]{cullershaleninvent1994}.

\begin{observation} Let $G$ be a hyperbolic group with boundary homeomorphic to the boundary of a graph-Kleinian group.  Then $G$ is virtually a graph-Kleinian group. \label{obs} \end{observation}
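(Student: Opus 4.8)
The plan is to recover the canonical Bowditch splitting of $G$ from the topology of $\partial G$, recognize it as the splitting of a graph-Kleinian group, and then geometrize. First I would dispose of the exceptional boundaries. The boundary of a graph-Kleinian group is never a single point, and it is never a circle (a convex-cocompact Kleinian group with $S^1$ boundary is a closed surface group, which is not the fundamental group of a manifold with boundary). If $\partial H$ is two points or a Cantor set, then $\partial G$ is too, and by the results cited in Section 1.2 the group $G$ is virtually cyclic or virtually free; since $\mathbb{Z}$ is the fundamental group of a solid torus and free groups are fundamental groups of handlebodies, both of which are (degenerate) books of $I$-bundles, $G$ is virtually graph-Kleinian in these cases. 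So I may assume $\partial G\cong\partial H$ is connected, with more than two points and not a circle; equivalently $G$ is one-ended and not cocompact Fuchsian, and Theorem~\ref{thm:jsj} applies to give the canonical splitting.

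Next I would show this splitting has no rigid (type (3)) vertices. The content of Section~\ref{splittings} is that the splitting is read off from $\partial G$ alone: the two-ended (type (1)) vertex and edge groups are the stabilizers attached to equivalence classes of local cut points, the hanging Fuchsian (type (2)) vertices are the stabilizers of the cyclically ordered Cantor sets $\bar\sigma$, and a rigid (type (3)) vertex occurs exactly when a piece of the complement fails to split further and is not such a $\bar\sigma$. All of this data, namely the local cut points, their valences and cyclic orders, and the resulting pieces, is a homeomorphism invariant of $\partial G$. Since $\partial G\cong\partial H$ for a graph-Kleinian group $H$, whose JSJ has only type (1) and (2) vertices by the description in Section 3.1, the boundary $\partial H$, and hence $\partial G$, contains no rigid piece. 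Therefore every vertex of the JSJ of $G$ is two-ended or hanging Fuchsian.

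It then remains to geometrize this splitting. From it I would build a book of $I$-bundles: each type-(2) vertex is a convex-cocompact Fuchsian (orbifold) group, which I thicken to an $I$-bundle over a compact $2$-orbifold with boundary; each type-(1) vertex is two-ended, giving a solid-torus orbifold; and, since by Theorem~\ref{thm:jsj} the edge groups incident to a type-(2) vertex are precisely its peripheral subgroups, this records how the pieces are glued along essential annuli. This produces a Haken $3$-orbifold $N$ with orbifold fundamental group isomorphic to $G$. Because $G$ is word-hyperbolic it contains no $\mathbb{Z}^{2}$, so $N$ is automatically atoroidal, and by Thurston's hyperbolization theorem for Haken orbifolds the interior of $N$ admits a geometrically finite, cusp-free, hence convex-cocompact hyperbolic structure. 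Thus $G$ acts as a convex-cocompact Kleinian group realizing the book of $I$-bundles, and in particular $G<\Isom(\HHH)$ is linear. Selberg's lemma then supplies a torsion-free finite-index subgroup $G'\leq G$, which is the fundamental group of a book-of-$I$-bundles manifold, that is, a graph-Kleinian group. Hence $G$ is virtually graph-Kleinian.

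The main obstacle is this final realization step. The atoroidality needed for hyperbolization is free, coming from hyperbolicity of $G$, so the difficulty is entirely in producing the $3$-orbifold $N$ correctly: the infinite-cyclic edge groups must be glued so that the peripheral and cyclic-order data recorded by Bowditch are respected, so that the attaching annuli are genuinely essential and $N$ really is the book of $I$-bundles whose limit set is forced to be $\partial G\cong\partial H$. A secondary point, which the orbifold formulation is designed to finesse, is that one obtains the finite-index torsion-free subgroup only \emph{after} realizing $G$ geometrically, since that is what makes $G$ linear and lets Selberg's lemma apply; one cannot invoke an a priori residual-finiteness statement for hyperbolic groups. I expect the careful matching of the gluing data and the verification that the hyperbolic structure is convex-cocompact with the prescribed boundary to be the crux; once $N$ is seen to be a hyperbolizable book of $I$-bundles, the remainder is formal.
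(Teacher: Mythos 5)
Your reduction to the one-ended case and your observation that the Bowditch splitting of $G$ can have no rigid (type (3)) vertices, because the splitting is read off from the homeomorphism type of $\partial G\cong\partial H$, match the paper's setup. The divergence, and the genuine gap, is in the step you yourself flag as the crux: passing from the graph-of-groups decomposition to a geometric realization. You propose to realize $G$ \emph{itself} as the orbifold fundamental group of a Haken $3$-orbifold $N$, hyperbolize $N$, and only then extract a torsion-free finite-index subgroup via Selberg's lemma. But the vertex groups of the canonical splitting of $G$ are only \emph{virtually} Fuchsian and \emph{virtually} cyclic: a priori they are arbitrary finite extensions of free (or surface) groups and of $\mathbb{Z}$, and such extensions need not be geometric, i.e.\ need not be $2$-orbifold groups or solid-torus-orbifold groups at all, let alone glue up into a $3$-orbifold whose orbifold fundamental group is $G$. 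So the object $N$ on which your whole geometrization step rests may simply not exist, and you cannot reach Selberg's lemma because you never get $G$ inside $\Isom(\HHH)$. You correctly note that one cannot invoke a priori residual finiteness of hyperbolic groups to pass to a torsion-free subgroup first, but the orbifold formulation does not finesse this difficulty; it just relocates it.

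The paper closes exactly this gap with a substantial input you do not use: the canonical decomposition is a malnormal quasiconvex hierarchy, so by Wise's theorem $G$ is virtually special and its quasiconvex subgroups are separable. Separability is what produces a finite-index subgroup $G'$ whose vertex groups are honest torsion-free free/surface groups with primitive peripheral generators, \emph{before} any geometry is done. Only then does one assemble an actual $3$-manifold (thickened surfaces glued along $\partial S\times I$ to solid tori, with gluing pattern dictated by the Bowditch graph) and apply Thurston's uniformization to realize $G'$ as a convex-cocompact Kleinian group. In short: the order of operations in your argument (geometrize first, then pass to finite index) must be reversed, and the reversal requires Wise's separability theorem, which is the missing idea in your proposal.
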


The proof is a special case of \cite[Theorem 1.2]{PeterABC}, although the statement is slightly different.   We give a brief outline of the proof in our situation.  Let $G$ be a group whose boundary is homeomorphic to the boundary of a graph-Kleinian group $K$.  By Bowditch's JSJ splitting theorem above, $G$ is a amalgamated product of virtually Fuchsian groups, the vertex groups,  amalgamated over edge groups which are their boundary subgroups.  Since this  decomposition forms a malnormal quasi-convex hierarchy, the group is virtually special, and hence quasi-convex subgroups of $G$  are separable by \cite{wise:qcvxh}.  Therefore, there is a finite index subgroup $G'$ of $G$ with a graph of groups decomposition where the vertex groups are torsion free Fuchsian groups and the edge groups are cyclic peripheral subgroups.  Furthermore in this decomposition of $G'$ the generators of the peripheral subgroup are primitive in $G'$.  Therefore, we can form a 3-manifold with fundamental group $G'$ which is a union of thickened surfaces $S \times I$ glued along $\partial S \times I$ to solid tori.  The gluing pattern is determined by the graph of groups decomposition given by Bowditch. Then applying Thurston's Uniformization Theorem, $G'$ can be realized as a convex co-compact Kleinian group.  Finally, Bowditch's characterization of the hanging Fuchsian groups implies that $G'$ is a graph-Kleinian group.

\subsection{One rigid piece} 

Now we consider the case when the Kleinian manifold is made out of surfaces, but some of the pieces may be rigid.  A {\it mixed Kleinian surface group} is a one-ended convex co-compact Kleinian group $\Gamma$ such that the Kleinian manifold $C(\Gamma)/\Gamma$  admits a decomposition along essential annuli so that each piece is a solid torus or an $I$-bundle over a compact surface with boundary (so $\partial S \times I$ is contained in the set of annuli). Furthermore, there must be at least one rigid piece in the Bowditch decomposition of $\Gamma$.  In contrast with graph-Kleinian groups above, it is not at all clear that an analog of Observation \ref{obs} holds for mixed Kleinian surface groups. We give a simple example, which seems to be very special. 

\begin{example}(The ABC example)  Consider a genus two handlebody $H$ with standard generators $a$ and $b$.  Let $c = [a,b]$.  Note that $H$ is homeomorphic to  $ T \times [0,1]$, where $T$ is a one-holed torus.  Now embed curves representing $a$, $b$ and $c$ as follows.  The curve $a$ lies in $T \times 0$, $b$ lies in $T \times 1$, and $c$ is the boundary curve on $T \times \frac{1}{2}$. See Figure \ref{twoways}. Now attach 3 copies of $S \times I$, where $S$ is a one-holed surface, to regular annular neighborhoods of $a$, $b$ and $c$ along $\partial S \times I$.  We denote the manifold by $M_{abc}$ and its fundamental group by  $\Gamma_{abc}$.  \end{example} 
 
 \begin{figure}[h]

\begin{center}
 {
 \psfrag{a}{$a$}
 \psfrag{b}{$b$}
 \psfrag{c}{$c$}
  \psfrag{A}{$a$}
 \psfrag{B}{$b$}
 \psfrag{C}{$c$}
  \includegraphics[height=3.5cm]{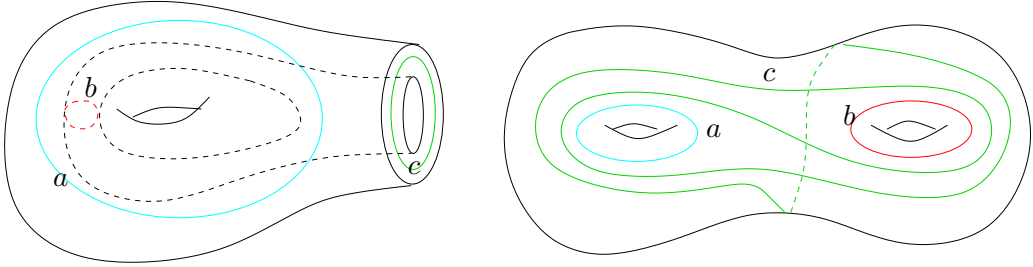}
 }
\end{center} 
\caption{The curves $a$, $b$ and $c$, on $T \times [0,1]$ and on $H$, respectively   }
\label{twoways}
\end{figure}

\begin{proposition} The group $\Gamma_{abc}$ is a mixed Kleinian surface group.  \end{proposition}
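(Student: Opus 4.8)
The plan is to verify in turn each clause in the definition of a mixed Kleinian surface group. I would begin by recording the structure coming directly from the construction. Cutting $M_{abc}$ along the three gluing annuli $N(a)$, $N(b)$, $N(c)$ exhibits it as a star-shaped union of $I$-bundles: the central piece is the handlebody $H\homeo T\times I$, regarded as the trivial $I$-bundle over the one-holed torus $T$, and the three outer pieces are the attached copies of $S\times I$. Correspondingly $\Gamma_{abc}$ is the fundamental group of a graph of groups with central vertex group $\langle a,b\rangle\homeo F_2$, three leaf groups $\pi_1(S)$ (each free), and three infinite cyclic edge groups $\langle a\rangle,\langle b\rangle,\langle c\rangle$. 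This already supplies the required annular decomposition: every piece is an $I$-bundle whose vertical boundary lies among the cutting annuli. The essential feature — and the source of everything that follows — is that for the central piece only $\partial T\times I = N(c)$ is vertical, while $N(a)$ and $N(b)$ sit in the horizontal boundary $T\times\{0,1\}$; this is permitted by the definition, which requires $\partial S\times I$ to be \emph{contained} in, not equal to, the set of annuli, and is exactly what separates the mixed case from a graph-Kleinian manifold.

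Next I would establish that $\Gamma_{abc}$ is a one-ended convex co-compact Kleinian group. Eliminating the generators $a$ and $b$ by means of the surface relations $a=[x_1,y_1]$ and $b=[x_2,y_2]$ presents $\Gamma_{abc}$ as a one-relator group on the six generators of $\pi_1(S_a),\pi_1(S_b),\pi_1(S_c)$ with relator $[[x_1,y_1],[x_2,y_2]]\,[x_3,y_3]^{-1}$. This relator is not a proper power and genuinely involves all six generators, so $\Gamma_{abc}$ is torsion-free and freely indecomposable, hence one-ended; I would stress that this uses all three tori, since attaching a single $S\times I$ along the primitive curve $a$ would merely produce a free group. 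Free indecomposability amounts to incompressibility of $\partial M_{abc}$, and together with irreducibility (the irreducible pieces are glued along incompressible annuli), atoroidality ($\Gamma_{abc}$ is a graph of free groups amalgamated over maximal cyclic, hence self-centralizing, edge groups and so contains no $\Z^2$), and the fact that every component of $\partial M_{abc}$ has negative Euler characteristic (the free boundary of $H$ is two pairs of pants, glued to copies of $S$), this lets me invoke Thurston's Uniformization Theorem exactly as in the proof of Observation \ref{obs}. It produces a geometrically finite structure without cusps, so $\Gamma_{abc}$ is convex co-compact and $M_{abc}\homeo C(\Gamma_{abc})/\Gamma_{abc}$.

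The remaining and decisive clause is that the Bowditch JSJ of $\Gamma_{abc}$ contains a rigid vertex, and I expect this to be the main obstacle. I would argue that the central vertex group $V=\langle a,b\rangle$, carrying the peripheral structure $\{\langle a\rangle,\langle b\rangle,\langle c\rangle\}$, is of type (3). Two points are needed. First, $V$ admits no nontrivial splitting over $\Z$ relative to these peripheral subgroups: in any such splitting $a$, $b$ and $c$ act elliptically on the Bass--Serre tree, and if the fixed trees of $a$ and $b$ were disjoint their commutator $c=[a,b]$ would act hyperbolically, contradicting that $c$ is peripheral; hence $a$ and $b$ fix a common vertex, $\langle a,b\rangle$ is elliptic, and the splitting is trivial. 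Thus $V$ is a single, undivided vertex of the canonical decomposition. Second, $V$ is not of hanging Fuchsian type: were it so, its three edge groups would be the boundary subgroups of a compact surface with $\pi_1\homeo F_2$ and three boundary components, that is, a three-holed sphere, whose three boundary curves are each nonzero in $H_1(F_2)\homeo\Z^2$; but $c=[a,b]$ is null-homologous. Since $V$ is non-elementary and quasiconvex yet neither two-ended nor hanging Fuchsian, Theorem \ref{thm:jsj} forces it to be rigid. With the annular decomposition and convex co-compactness already in hand, this verifies every clause and shows that $\Gamma_{abc}$ is a mixed Kleinian surface group. The crux throughout is the relation $c=[a,b]$: it is what prevents the three gluing curves from being organized into a single surface-with-boundary, and thereby converts the central $I$-bundle into a rigid piece.
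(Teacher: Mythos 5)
Your handling of the decisive point---rigidity of the central vertex---is correct but takes a genuinely different route from the paper. The paper doubles the pared handlebody along its free boundary, identifies the double with the Borromean rings exterior by a presentation computation plus Mostow rigidity, and deduces acylindricity (hence rigidity) from hyperbolicity of the double. You instead argue algebraically: by Serre's lemma, if $a$ and $b$ were elliptic with disjoint fixed trees then $[a,b]$ would be hyperbolic, so in any splitting of $V=\langle a,b\rangle$ relative to $\lbrace\langle a\rangle,\langle b\rangle,\langle c\rangle\rbrace$ the whole of $V$ is elliptic; and the homology obstruction ($c=[a,b]$ is null-homologous, while each boundary curve of the pair of pants---the only surface with free $\pi_1$ of rank $2$ and three boundary components---is primitive in $H_1$) rules out the hanging Fuchsian alternative. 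This is more elementary and self-contained than the paper's argument; what it does not provide is the explicit identification of the doubled rigid piece with the Borromean rings complement, which the paper exploits later (Lemma \ref{ap} and the theorem following it) to recognize the limit set of the rigid piece as the Apollonian gasket. You should also say a word about why your topological splitting may be compared with the \emph{canonical} Bowditch decomposition (uniqueness of the JSJ for splittings whose vertices are all two-ended, QH, or relatively unsplittable), though the paper elides this as well.

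There is, however, one step that fails as stated: the claim that the relator $[[x_1,y_1],[x_2,y_2]][x_3,y_3]^{-1}$ ``is not a proper power and genuinely involves all six generators, so $\Gamma_{abc}$ is \ldots freely indecomposable.'' Involving all generators does not imply free indecomposability of a one-relator group: $\langle x,y,z \mid xyz\rangle\cong F_2$ is a nontrivial free product even though its relator is cyclically reduced, involves every generator, and is not a proper power. By Whitehead's criterion the correct condition is that the relator lie in no \emph{proper free factor} of the ambient free group, and proper free factors need not be generated by subsets of the chosen basis; verifying this requires an actual argument (Whitehead graphs, or otherwise). Fortunately your own machinery repairs the gap: by the Shenitzer--Swarup--Stallings criterion, a graph of free groups over infinite cyclic edge groups is freely indecomposable provided no vertex group splits freely relative to its incident edge groups; for the leaf vertices this holds because $[x_i,y_i]$ is null-homologous and not a proper power, hence conjugate into no proper (necessarily primitive cyclic) free factor of $\langle x_i,y_i\rangle$, and for the central vertex it is exactly your ellipticity argument with trivial edge stabilizers. (One should also note that $\Gamma_{abc}$ is neither trivial nor $\Z$ before concluding one-endedness from free indecomposability and torsion-freeness.) The conclusion is true---the paper gets it from incompressibility of $\partial M_{abc}$, also without proof---but the inference you wrote is not valid.
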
 
\begin{proof} 
The group can be realized as a convex co-compact Kleinian group by applying 
Thurston's uniformization theorem to $M_{abc}$.  Since the boundary of $M_{abc}$ is incompressible, $\Gamma_{abc}$ is one-ended.   The Bowditch decomposition will have cyclic  vertex groups corresponding to regular solid toric neighborhoods of the three attaching curves $a$, $b$ and $c$. There will be three ``maximally hanging Fuchsian" vertex groups corresponding to the three copies of $S \times I$. 

It remains to show that the manifold $T \times I$,  pared along annular neighborhoods of $a$, $b$ and $c$ (denoted $N_a$, $N_b$, $N_c$) is acylindrical, and thus rigid. Indeed, consider $T \times I$, doubled along $\partial(T \times I) \setminus (N_a \cup N_b \cup N_c)$.  We claim this double $W$ is homeomorphic to 
the exterior ${\mathcal B}$ of the Borromean rings in $S^3$. It is well-known
that the complement of the Borromean rings admits a hyperbolic metric of finite 
volume and this implies that $W$ is atoroidal and the pared manifold is 
acylindrical.  That $W$ is homeomorphic to $\mathcal{B}$ is well-known and was first noticed in Hodgson's Thesis \cite[Section 4]{Hodgson}.  

 In Figure \ref{brings}, opposite sides of the cube should be identified to obtain the double, so that $W$ can be seen also as the exterior of a link in the $3$-torus. 
A standard computation shows that the fundamental group of $W$ admits the 
following presentation: $\langle a,b,c \ | \ [a,[B,c]], [b,[C,a]], [c,[A,b]] \rangle$ 
(where capital letters denote inverses). 
It is easy to see that this group is isomorphic to the fundamental group of 
${\mathcal B}$. Note that $W$ has a cover $\tilde W$ corresponding to the 
universal cover of the $3$-torus: this is ${\mathbb R}^3$ minus the
tubular neighborhoods of infinitely many lines parallel to the standard axes.  This is also a cover of the Borromean rings exterior. Indeed, the Borromean rings' exterior is obtained as the quotient of this cover by the action of the group generated by $\pi$-rotations of ${\mathbb R}^3$ about the lines whose neighborhoods have been removed.  Note also that a fundamental domain for $W$ is compact.  Therefore, the complement of the link in $T^3$ in Figure \ref{brings} admits a complete finite volume hyperbolic metric, induced by that of the cover of the Borromean rings complement.  By Mostow rigidity, using the fact that $\pi_1(W)=\pi_1({\mathcal B})$, $W$ is homeomorphic to the Borromean rings exterior.  

Therefore, $\Gamma_{abc}$ has the property that all the vertex groups in the graph of groups decomposition are (free) surface groups, and at least one of them is a rigid group. \end{proof} 

Note that, since the vertex group corresponding to $H$ is rigid, there is a hyperbolic structure on a genus 2 handlebody, where the curves $a$, $b$, and $c$ are parabolic. After the curves are pinched to parabolics, there are two three-cusped spheres as the boundary, and these are totally geodesic.  
We denote this Kleinian group (which is unique up to conjugation) by $H_W$, as it is the fundamental group of a handlebody where some of the boundary curves are pinched to parabolics. 

\begin{lemma} \label{ap} The limit set of $H_W$, $\Lambda(H_W)$, is the Apollonian gasket. \end{lemma}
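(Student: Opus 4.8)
\emph{Proposal.} The plan is to convert the computation of $\Lambda(H_W)$ into the identification of a circle packing. The group $H_W$ is a \emph{maximal cusp}: it is a geometrically finite free group of rank two in which representatives of a full pants system $a,b,c$ of the boundary surface are parabolic, and its conformal boundary $\Omega(H_W)/H_W$ is the union of the two thrice-punctured spheres $P_0,P_1$ described just before the lemma. A thrice-punctured sphere carries a unique complete hyperbolic structure and is therefore automatically totally geodesic, so each component of $\Omega(H_W)$ is a round open disk whose stabilizer is a conjugate of one of the two thrice-punctured-sphere subgroups $\Gamma_0,\Gamma_1\homeo\Gamma(2)$ and whose bounding circle is the (round) limit set of that conjugate. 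Consequently $\Omega(H_W)=\bigsqcup_i D_i$ is a disjoint union of round disks and $\Lambda(H_W)=S^2\setminus\bigsqcup_i D_i$ is the residual set of the circle packing $\{\partial D_i\}$. It then suffices to show that this packing is M\"obius-equivalent to the Apollonian packing.

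First I would pin down the tangencies. Two disks $D_i,D_j$ touch precisely when the stabilizing thrice-punctured-sphere subgroups share a cusp, i.e.\ at a common parabolic fixed point; no other contacts occur, since distinct round limit circles of a discrete group are either disjoint or tangent. Using that $P_0$ has cusps of types $(a,a,c)$ and $P_1$ has cusps $(b,b,c)$ — from cutting $T_0$ along $a$ and $T_1$ along $b$ — I would exhibit the seed: the two convex-core disks $D_0,D_1$ are tangent at the fixed point of a conjugate of $c=[a,b]$, and together with their nearest neighbours across the $a$- and $b$-cusps they form four mutually tangent circles, i.e.\ a Descartes quadruple. The $H_W$-orbit of this seed then inscribes, in each resulting curvilinear triangle, the unique circle tangent to its three sides, reproducing the recursive Apollonian construction.

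To finish I would invoke the characterization of the Apollonian gasket as the residual set of the packing obtained from a Descartes quadruple by repeatedly inscribing the inner Soddy circle in every curvilinear triangle: since the nerve of $\{\partial D_i\}$ is precisely this Apollonian triangulation and the interiors of the $D_i$ exhaust $S^2$ up to a set with empty interior (as $H_W$ is non-elementary and geometrically finite), the residual set $\Lambda(H_W)$ is homeomorphic to the gasket. An equivalent, more computational route — the one suggested by the ``standard computation'' style of the surrounding text — is to write down an explicit discrete faithful representation $\rho$ with $\mathrm{tr}\,\rho(a)=\mathrm{tr}\,\rho(b)=2$; the trace identity $\mathrm{tr}^2 a+\mathrm{tr}^2 b+\mathrm{tr}^2(ab)-\mathrm{tr}\,a\,\mathrm{tr}\,b\,\mathrm{tr}(ab)=\mathrm{tr}[a,b]+2$ then lets one choose $\mathrm{tr}\,\rho(ab)$ so that $\rho([a,b])$ is parabolic. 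Normalizing the four generating circles to integral curvatures and reading off the Descartes relation exhibits $H_W$ as the gasket group of Mumford--Series--Wright; its appearance here is consistent with the Borromean-rings description of $W$, whose commensurability class is tied to that of the gasket.

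The main obstacle is the middle step: proving that the nerve of the packing is \emph{exactly} the Apollonian triangulation — that the seed is genuinely a Descartes quadruple, that every complementary gap is a curvilinear triangle (so no larger gaps survive), and that no accidental extra tangencies occur. These points require careful bookkeeping of how the $a$-, $b$- and $c$-cusps glue the two thrice-punctured spheres along the convex core, and it is precisely here that one must use that $c=[a,b]$ is the single shared cusp. Everything else is either standard hyperbolic geometry (geometric finiteness and total geodesy of the boundary) or the classical identification of the Apollonian gasket as the residual set of such a packing.
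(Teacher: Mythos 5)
Your ``equivalent, more computational route'' is in fact the paper's proof: the paper normalizes so that $a\co z\mapsto z+1$ and $b$ is parabolic fixing $0$, solves the condition that the commutator be parabolic to obtain an explicit $b$ (the matrices $\bigl(\begin{smallmatrix}1&1\\0&1\end{smallmatrix}\bigr)$, $\bigl(\begin{smallmatrix}1&0\\2i&1\end{smallmatrix}\bigr)$, i.e.\ traces $2$, $2$, $2\pm 2i$, exactly the solution of the Fricke identity you write down), and then identifies the resulting limit set with the gasket group of Figure~7.5 of Mumford--Series--Wright. So that half of your proposal matches the paper essentially verbatim. Your primary route is genuinely different and more conceptual: instead of exhibiting an explicit representation, you use that the two conformal boundary components are thrice-punctured spheres, hence totally geodesic, so that $\Omega(H_W)$ is a union of round disks and $\Lambda(H_W)$ is the residual set of a circle packing; the gasket is then recognized combinatorially from the cusp pattern $(a,a,c)$, $(b,b,c)$ with the single shared cusp $c=[a,b]$. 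What this buys is an explanation of \emph{why} the answer is the gasket and an argument that is independent of any particular normalization or picture; what it costs is precisely the step you flag yourself, namely verifying that the nerve of the packing is exactly the Apollonian triangulation (a genuine Descartes seed, every gap a curvilinear triangle, no extra tangencies). That verification is the real content of the geometric route and is not carried out in your proposal, so as written the first route is an outline rather than a proof; but since the paper's own identification is ultimately by inspection of a computer picture (``can easily be seen to be conjugate to''), completing your nerve argument would arguably make the lemma more self-contained than the printed version, and it dovetails with the packing-rigidity fact the paper uses later (a configuration of round circles homeomorphic to the gasket is M\"obius-conjugate to it).
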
   
\begin{proof}  The Kleinian group $H_W$ is the fundamental group of the genus two handlebody, pared along the generators $a$ and $b$ and their commutator $c$. This admits a hyperbolic structure with totally geodesic boundary.  The group $H_W$ is a free group on two generators $a$ and $b$ where $a,b$ and the commutator $ABab$ are all parabolic. (Capital letters denote inverses.) Up to conjugacy, we may assume $a: z \rightarrow z+1$ and the fixed point of $b$ is zero.  Solving the equations  so that the commutator is parabolic we obtain: $b: z \rightarrow 1/(2i z +1)$. Then the fixed point of $ABab$ is $(-1 +i)/2$.  The limit set is pictured in Figure \ref{Ap}, and can easily be seen to be conjugate to the Apollonian gasket in Figure 7.5 of \cite{Indra}, which is also the limit set of a free group of rank 2. \end{proof}

\begin{figure} 
\begin{sideways}
\includegraphics[trim={5cm 0 0 0},clip=true, angle=270]{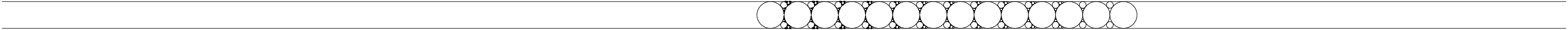} 
\end{sideways} 
\caption{Part of the limit set of $H_W$.    This picture was made with Curt McMullen's LIM program, available at \url{http://www.math.harvard.edu/~ctm/programs/index.html} \label{Ap} }
\end{figure}

\begin{theorem} Any hyperbolic group $G$ with $\partial G$ homeomorphic to $\partial \Gamma_{abc}$ is a virtually mixed Kleinian surface group.  Furthermore, each rigid vertex group in the Bowditch decomposition of $G$ is commensurable with $H_W$. \end{theorem}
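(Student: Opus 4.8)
The plan is to recover the shape of the Bowditch decomposition of $G$ from the topology of its boundary, to identify the rigid piece by a rigidity argument for the Apollonian gasket, and finally to assemble everything into a Kleinian manifold as in Observation \ref{obs}. First I would apply Theorem \ref{thm:jsj} to $G$. Every feature of the splitting used in Section \ref{splittings} --- the underlying graph, the valencies of the local cut points, their equivalence classes together with the cyclic orders on the infinite ones, and which classes bound hanging Fuchsian pieces --- is determined by the homeomorphism type of $\partial G$. Since $\partial G \homeo \partial \Gamma_{abc}$, the Bowditch graph of groups of $G$ is therefore combinatorially the same as that of $\Gamma_{abc}$: three two-ended vertex groups, three maximal hanging Fuchsian vertex groups, and a single rigid vertex group $R$, with edge groups the cyclic peripheral subgroups. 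In particular $G$ has exactly one rigid piece.

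The heart of the argument is the identification of $R$. Together with its three incident cyclic edge groups, $R$ forms a relatively hyperbolic pair whose Bowditch boundary is the corresponding closed subset of $\partial\Gamma_{abc}$, namely the limit set of the pared handlebody; by Lemma \ref{ap} this is the Apollonian gasket. The induced action of $R$ on this gasket is a uniform convergence action, hence acts by uniformly quasi-M\"obius homeomorphisms with respect to a visual metric. I would then invoke the quasisymmetric rigidity of the Apollonian gasket: every quasisymmetric self-homeomorphism of the round gasket is the restriction of a M\"obius transformation. Consequently the action of $R$ would be quasisymmetrically conjugate to a M\"obius action, realizing $R$ as a geometrically finite Kleinian group whose limit set is a gasket bounded by mutually tangent round circles. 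As any two such round gaskets are M\"obius-equivalent (the generating configuration of mutually tangent circles is M\"obius-homogeneous) and the full group of M\"obius transformations preserving the gasket is discrete and geometrically finite, both $R$ and $H_W$ would sit as finite-index subgroups of this maximal group. Hence $R$ is commensurable with $H_W$.

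It remains to promote this to the global statement, following the proof of Observation \ref{obs}. The decomposition is a malnormal quasiconvex hierarchy --- the cyclic edge groups are quasiconvex and malnormal as boundary subgroups of the surface and handlebody pieces, and each vertex group admits a further quasiconvex hierarchy terminating in free groups --- so by Wise \cite{wise:qcvxh} the group $G$ is virtually special and its quasiconvex subgroups are separable. Passing to a finite-index torsion-free subgroup $G'$ with primitive cyclic peripheral generators, I would build a $3$-manifold with fundamental group $G'$ by gluing solid tori and thickened surfaces to a copy of the pared handlebody of $H_W$ (available since $R$ is commensurable with $H_W$), and apply Thurston's uniformization theorem to realize $G'$ as a convex co-compact Kleinian group. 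Bowditch's characterization of the hanging Fuchsian groups then identifies $G'$, and hence $G$, as a virtually mixed Kleinian surface group whose rigid piece is commensurable with $H_W$.

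The main obstacle is squarely the rigidity step for the gasket: passing from the abstract uniform convergence action of $R$ on its Bowditch boundary to a genuine M\"obius action whose limit set is a round Apollonian gasket. This is the one place where the topology of the boundary is leveraged into conformal rigidity, and where commensurability with the specific group $H_W$ --- rather than mere quasi-isometry --- is forced; by contrast the combinatorial transfer of the JSJ and the virtually special assembly are routine adaptations of Observation \ref{obs} and its citations.
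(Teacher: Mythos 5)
Your overall architecture matches the paper's --- transfer the Bowditch decomposition through the homeomorphism of boundaries, identify the rigid piece up to commensurability with $H_W$, then assemble a $3$-manifold as in Observation~\ref{obs} --- but the step you yourself flag as ``the main obstacle'' is a genuine gap, and it is exactly the step the paper resolves by a different route. You propose to endow the relative boundary of the rigid piece $R$ with a visual metric, observe that $R$ acts by uniformly quasi-M\"obius maps, and then invoke quasisymmetric rigidity of the Apollonian gasket. But that rigidity theorem concerns self-maps of the \emph{round} gasket; to apply it you would first need to know that your visual-metric gasket is quasisymmetrically equivalent to the round one. At this stage of the argument $G$ is only an abstract hyperbolic group, so no such uniformization is available, and producing one is a hard problem of Cannon/Kapovich--Kleiner type rather than a citable fact. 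As written, the conformal structure you need is assumed rather than constructed.

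The paper sidesteps quasiconformal analysis entirely. It first notes that $V_R$ is virtually free (its boundary is a Cantor set) and that the decomposition space of $\partial V_R$ obtained by collapsing the endpoint pairs of the incident edge groups is planar, being homeomorphic to $\Lambda(H_W)$. Otal's theorem \cite{Otal} then realizes $V_R$ as the fundamental group of a handlebody with the peripheral structure carried by essential curves $c_1,\dots,c_n$ on its boundary. Rigidity of the vertex group plus Thurston's uniformization gives a hyperbolic structure $H_G$ on this pared handlebody with totally geodesic boundary and the $c_i$ parabolic; the limit set $\Lambda(H_G)$ is therefore automatically a union of \emph{round} circles homeomorphic to the gasket, and conjugating it to the standard gasket is elementary (send the centers of three mutually tangent circles to the centers of three mutually tangent circles). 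Commensurability with $H_W$ then follows as you say, from both groups having finite index in the discrete group of maximal symmetries of the gasket. In short: the missing ingredient in your argument is Otal's realization theorem, which converts the topological data into $3$-manifold data so that roundness of the circles comes from geometry (totally geodesic boundary) rather than from an unproved quasisymmetric uniformization. Your final assembly step via \cite{wise:qcvxh} and Observation~\ref{obs} is consistent with the paper.
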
 

\begin{proof} 

Since $\partial G$ is homeomorphic to $\partial \Gamma_{abc}$, $G$ is one-ended and $\partial G$ has the same Bowditch decomposition as $\partial \Gamma_{abc}$.  In particular, $G$ has a graph of groups decomposition where the edge groups are virtually cyclic, and the vertex groups are either maximally hanging Fuchsian or rigid.  Let $V_R$ be an arbitrary rigid vertex group of $G$.  Then $V_R$ is virtually free, since its boundary is homeomorphic to the boundary of the rigid vertex group of $\Gamma_{abc}$, which is a Cantor set. We continue to denote the finite index free subgroup of $V_R$ by $V_R$. 
The boundary $\partial V_R$  has the property that identifying the endpoints of the edge groups results in a planar set, since this space is homeomorphic to the limit set of $H_W$, $\Lambda(H_W)$.  Then, by Otal \cite{Otal}, $V_R$ is the fundamental group of a handlebody $H$ and there are essential curves $\lbrace c_1,...,c_n \rbrace $ on $H$ such that each edge subgroup incident to $V_R$ is a conjugate of some $c_i$. Then, as $V_R$ is rigid, $V_R$ admits a hyperbolic structure with totally geodesic boundary, where the $c_i$ are parabolic.  Call this structure $H_G$. Then the limit set  $\Lambda(H_G)$ is homeomorphic to $\Lambda(H_W)$.  By Lemma \ref{ap}, $\Lambda(H_G)$ is the Apollonian gasket. Any space consisting of round circles homeomorphic to the Apollonian gasket is conjugate to the Apollonian gasket, by taking the centers of three tangent circles to the centers of three tangent circles.  Then, since $H_W$ and $H_G$ are both finite index in the discrete group of maximal symmetries of the limit set,  $H_W$ is commensurable to $H_G$ which implies $H_G$ is a surface group. As above in Observation 3.1, we have a finite index subgroup $G'$ of $G$ which is a mixed Kleinian surface group. \end{proof} 
\begin{figure}[h]
\label{double} 
\begin{center}
 {
 \psfrag{a}{$a$}
 \psfrag{b}{$b$}
 \psfrag{c}{$c$}
  \includegraphics[height=6cm]{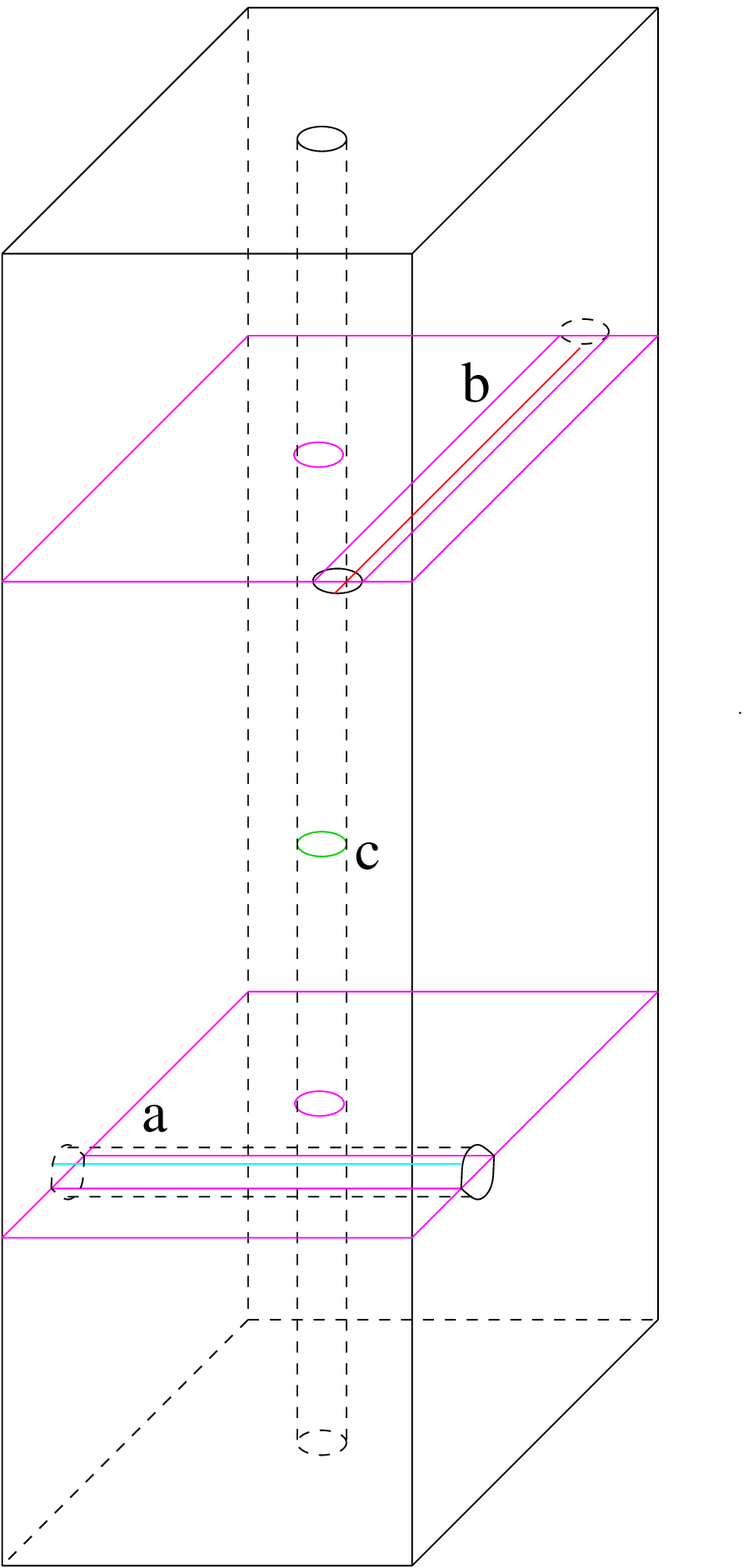}
 }

\end{center}
\caption{A decomposition of the double of the rigid piece. Parallel faces are identified by translations as for the $3$-torus.}
\label{brings}
\end{figure}
\section{Homeomorphism types of limit sets} 

The concept of a tree of metric compacta was introduced by \'Swi\c{a}tkowski  \cite{Swiat}  as a method for understanding more exotic boundaries of hyperbolic groups, but it is also useful in understanding the boundaries of Kleinian groups. 
\begin{definition}(\'Swi\c{a}tkowski) A {\it tree system of metric compacta} is a tuple $\Omega = (T, \lbrace K_t \rbrace, \lbrace \Sigma_e \rb, \lb \phi_e \rb)$ 
such that \begin{enumerate} 
\item $T$ is a countable tree
\item to each $t \in V_{T}$, there is an associated compact metric space $K_t$
\item to each $e \in E_{T}$, there is an associated non-empty compact subset $\Sigma_e \subset K_{\alpha(e)}$, and a homeomorphism $\phi_e: \Sigma_e \rightarrow \Sigma_{\bar e}$ such that  $\phi_{\bar e} = {\phi_e}^{-1}$. 

\item For each $t \in V_T$, the family $\lb \Sigma_e : e$ is an oriented edge emanating from $t \rb$ has the property that for every $\epsilon >0$, only finitely many sets in the family have diameter $>\epsilon$.
\end{enumerate} 
\end{definition} 
The {\em limit of a tree system of metric compacta} is the natural topological object obtained by identifying the vertex spaces via the $\phi_e$ along with the ends of the tree.  The precise definition is given in \cite[Section 1.C]{Swiat}. 
\begin{theorem} Let $M$ be the boundary of a hyperbolic group, and suppose that $M \subset S^2$.  Then $M$ is the limit of a tree system  of metric compacta, where each vertex space is either: 
\begin{enumerate}
\item The 2-sphere $S^2$ (in which case the tree must be trivial) 
\item The Sierpi\'nski carpet.  
\item The circle $S^1$ 
 
\item not connected.  
\item itself the limit of a tree system of metric compacta 

\end{enumerate} 
\end{theorem}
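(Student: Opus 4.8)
The plan is to read the tree system directly off Bowditch's canonical JSJ splitting (Theorem~\ref{thm:jsj}): the tree $T$ will be the Bass--Serre tree of that splitting, the vertex and edge spaces will be the limit sets of the vertex and edge groups embedded in $S^2$, and the identification of the resulting limit with $M$ will come from the description of the boundary of a graph of quasiconvex groups in \cite{Swiat}.

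First I would reduce to the one-ended case by counting ends of $G$. If $G$ is finite, two-ended, or virtually free, then $M$ is empty, a two-point set, or a Cantor set \cite[\S\,8]{KB}; the last two are ``not connected'' and already constitute a single type~(4) vertex space over the trivial tree. So assume $G$ is one-ended, so that $M$ is a locally connected continuum with no global cut point \cite{Bowcut}. If $M$ has no local cut point, then, being planar, connected, locally connected, and without cut points or local cut points, it is homeomorphic to $S^2$ (type~1) or, being then one-dimensional, to the Sierpi\'nski carpet (type~2) by the characterization recalled earlier; here $T$ is a single vertex. If instead $M\cong S^1$, every point is a local cut point of valency two, $G$ is virtually a cocompact Fuchsian group \cite{Bow}, and we take a single type~(3) vertex. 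Otherwise $M\neq S^1$ has local cut points, $G$ is one-ended and not cocompact Fuchsian, and Theorem~\ref{thm:jsj} applies.

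Next I would build the tree system. Let $T$ be the Bass--Serre tree of the JSJ splitting; it is a countable tree, and to each vertex $t$ and edge $e$ I assign $K_t=\Lambda(G_t)\subset S^2$ and $\Sigma_e=\Lambda(G_e)$, the two-point limit set of the two-ended edge group, with $\phi_e$ identifying the two copies of $\Lambda(G_e)$ sitting in the adjacent vertex limit sets. Fullness and quasiconvexity of all vertex and edge groups (Theorem~\ref{thm:jsj}) make these limit sets embed and overlap compatibly, and axiom~(4) holds because the $G_t$-translates of a fixed peripheral limit set accumulate only on $\Lambda(G_t)$ and so have diameters tending to zero. The vertex spaces are then classified by Bowditch's trichotomy: a two-ended vertex gives two points, and a hanging Fuchsian vertex (virtually a surface-with-boundary group, hence virtually free) gives a Cantor set, both of type~(4); a rigid vertex group with no local cut point in its own boundary gives, by the previous paragraph applied to $G_t$, the carpet (type~2) or a totally disconnected set (type~4); and a rigid vertex whose boundary does carry local cut points is itself the limit of a tree system, by applying the theorem recursively to $G_t$ (type~5), the recursion terminating by accessibility of the relative JSJ.

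The main obstacle is this last step: proving that the limit of the tree system just built is homeomorphic to $M$. The delicate point is that the connected continuum $M$ must be recovered from a tree of vertex pieces that are typically totally disconnected (two-point sets and Cantor sets), so the connectivity of $M$ has to come entirely from the tree structure and the adjoined ends of $T$. I would derive this from the identification, in \cite[Section 1.C]{Swiat}, of the boundary of a group acting on $T$ with full quasiconvex vertex and edge stabilizers as the limit of the associated tree system of boundaries; the content to verify is that the maps $\phi_e$ are exactly the identifications forced by the inclusions $\Lambda(G_e)\subset\Lambda(G_t)$, and that adjoining the ends of $T$ supplies precisely the points of $M$ lying in no single vertex space.
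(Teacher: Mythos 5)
Your proposal follows essentially the same route as the paper: split off the cases without local cut points (giving $S^2$, the carpet, or a disconnected set), invoke Bowditch's theorem to get either $S^1$ or the canonical JSJ splitting, realize $M$ as the limit of the tree system built from the Bass--Serre tree with vertex spaces the limit sets of the vertex groups, and recurse with termination guaranteed by Louder--Touikan. You supply somewhat more detail than the paper does (the reduction from the non-one-ended case and the explicit appeal to \cite{Swiat} for identifying the limit of the tree system with $M$), but the argument is the same one.
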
 

\begin{proof} Let $M$ be planar with $M \homeo \partial G$.  Then suppose $M$ has no local cut points. If $M$ has covering dimension 2, 
it contains an open subset of $S^2$.  Since $G$ acts minimally on $G$ (there are no non-trivial closed invariant subsets) and $M$ is closed, $M$ has empty frontier $\bar{M} \setminus int(M)$. Thus $M$ must be all of $S^2$.   
Now suppose $M$ is one-dimensional. If it is connected, it is locally connected.  Therefore, in this situation we have a compact, 
connected, locally connected subset of the plane with no local cut point, which must be the Sierpi\'nski carpet \cite{whyburn:sierpinski}.  
If $M$ is connected and 0-dimensional, it is a point.  This cannot happen, since a group cannot act geometrically on a hyperbolic space with one equivalence class of geodesic rays. 

If $M$ is connected and has local cut points, then by Bowditch \cite{Bowcut} $M$ is either homeomorphic to $S^1$ or there is a canonical splitting of $G$ over $\mathbb{Z}$ where $G$ acts on a tree, where the stabilizer of each vertex corresponds to a subgroup $G_v$ of $G$; cf. Theorem \ref{thm:jsj}.  
Then $M$ is a tree of metric compacta, where the compacta are the limit sets of the vertex groups. Similarly, each vertex group has a boundary which is either (2), (3), or (4) above, or has local cut points, and hence a canonical splitting over $\mathbb{Z}$.  We only need to repeat this process finitely many times, by \cite{louder:touikan}.

\end{proof} 

We note that Benoist and Hulin have a characterization of the sets in the Euclidean $S^2$ which are the limit sets of convex cocompact Kleinian groups, \cite{Benoist}.  They are exactly the conformally autosimilar closed subsets of this sphere.

\vskip .3 in
\textsc{Peter Ha\"{\i}ssinsky}

\textsc{Aix-Marseille Universit\'e, CNRS, Centrale Marseille, I2M, UMR 7373,}

\textsc{13453 Marseille, France}

{peter.haissinsky@univ-amu.fr}

\vskip .2 in 
\textsc{Luisa Paoluzzi} 

\textsc{Aix-Marseille Universit\'e, CNRS, Centrale Marseille, I2M, UMR 7373,}

\textsc{13453 Marseille, France}

{luisa.paoluzzi@univ-amu.fr}

\vskip .2  in
\textsc{Genevieve S. Walsh} 

\textsc{Tufts University} 

\textsc{Medford MA 02155}

{genevieve.walsh@gmail.com}

\end{document}